\newtheorem{theorem}{Theorem}
\newtheorem{corollary}[theorem]{Corollary}
\newtheorem{remark}[theorem]{Remark}
\newenvironment{proof}[1][Proof]{\noindent\textbf{#1.} }{\ \rule{0.5em}{0.5em}}
\begin{document}

\title{Infinite family of equi-isoclinic planes in Euclidean odd dimensional spaces and of complex symmetric conference matrices of odd orders }
\author{Boumediene Et-Taoui\\Universit\'{e} de Haute Alsace -- LMIA\\4 rue des fr\`{e}res Lumi\`{e}re 68093 Mulhouse Cedex}
\maketitle

\begin{abstract}
A $n$-set of equi-isoclinic planes in $\mathbb{R}^{r}$ is a set of $n$ planes spanning $\mathbb{R}^{r}$ each pair of which has the same non-zero angle $\arccos\sqrt\lambda$.
We prove that for any odd integer $k\geq 3$ such that $2k=p^{\alpha}+1$, $p$ odd prime, $\alpha$ non-negative integer the maximum number of
equi-isoclinic planes with angle $\arccos\sqrt\frac{1}{2k-2}$ in $\mathbb{R}^{2k-1}$ is equal to $2k-1$.
The solution of this geometric problem is obtained by the construction of complex symmetric conference matrices of order $2k-1$.

{AMS Classification: } $15B33$, $15B57$, $51M05$, $51M15$, $51M20$, $51F20$, $51K99$.

{Keywords:} equi-isoclinic planes, conference matrices, Seidel's matrices.
\end{abstract}

B.Ettaoui@uha.fr \bigskip

\section{Introduction}

\subsection{Equi-isoclinic planes and complex conference matrices}
This paper deals with the so-called notion of \emph{isoclinic} planes. In an
Euclidean space of dimension $r$, $r\geq4$, two planes have $2$ angles. They are the stationary
values of the angle between the lines $l$ and $m$, if $m$ runs through one
plane and $l$ runs through the other plane. If the two angles are equal (\textit{%
i.e.} the angle between $l$ and $m$ is constant), then the two planes
are said to be \emph{isoclinic}. If all the planes of a given $n$-tuple are
pairwise isoclinic with the same angle $\phi$ then the planes are
said to be \emph{equi-isoclinic}, and the number $\lambda=\cos^{2} \phi $ is called the
\emph{parameter} of the tuple.
In \cite{LS1} the authors pose the problem of finding the maximum number $%
v(2,r)$ of \emph{equi-isoclinic} planes that can be imbedded in $\mathbb{R}^{r}$, and more precisely, the maximal number $v_{\lambda}\left(
2,r\right) $ of \emph{equi-isoclinic} planes in $\mathbb{\mathbb{R}}^{r}$ with
the \emph{parameter} $\lambda$. They proved that
\begin{equation}
(2-r\lambda)v{_{\lambda}}(2,r)\leq r(1-\lambda)\text{.}  \label{1}
\end{equation}
Among other things they showed that $v(2,4)=v_{\frac{1}{3}}(2,4)=4$.
In \cite{Id0}, the author gives the values of $v_{\lambda}\left( 2,2r\right) $ for
some infinite family of ordered pairs $\left( \lambda,r\right) $; however, for odd
integers $r$, no value of $v$ was known yet. The first example in odd
dimensional space is given in \cite{Id}, where it is shown that $v\left(
2,5\right) =v_{\frac{1}{4}}\left(2,5\right)= 5$. It is established in this paper that for any odd integer $k\geq 3$ such that $2k=p^{\alpha}+1$, $p$ odd prime,
$\alpha$ non-negative integer, $v_{\frac{1}{2k-2}}\left( 2,2k-1\right) =2k-1$. The case $k=3$ , handles the present author to derive $v(2,5)$ \cite{Id}.
It is seen that our geometric problem amounts to find some complex symmetric square matrices of odd orders which are called complex conference matrices.
A complex $n\times n$ \textit{conference matrix} $C$ is a matrix
with $c_{ii}=0$ and $\left\vert c_{ij}\right\vert =1$, $i\neq j$ that
satisfies
\begin{equation}
CC^{\ast}=(n-1)I_{n} \label{2}.
\end{equation}
Real \emph{conference} matrices have been heavily studied in the literature in
connection with combinatorial designs in geometry, engineering, statistics,
and algebra. In particular questions in the theory of polytopes, posed by Coxeter (\cite {Co}), led Paley (\cite {P}) to the construction of real Hadamard matrices in which he used \emph{conference} matrices.
The following necessary conditions are known : $n\equiv2$ $(\operatorname{mod}%
4)$ and $n-1=a^{2}+b^{2}$, $a$ and $b$ integers for symmetric matrices
(\cite{B}, \cite{LS}, \cite{P}, \cite{R}), and $n=2$ or $n\equiv0$ $(\operatorname{mod}%
4)$ for skew symmetric matrices \cite{W}. For the values of $n$ for which there exist a real \emph{conference} matrix of order $n$ we refer to
(\cite{GS}, \cite{M}, \cite{P}, \cite{SW}, \cite{W}). In \cite{DGS} the authors showed that essentially there are no other
real \emph{conference} matrices. Precisely they proved that any \emph{real conference} matrix
of order $n>2$ is equivalent, under multiplication of rows and columns by
$-1$, to a \emph{conference} symmetric or to a skew symmetric matrix according as $n$
satisfies $n\equiv2$ $(\operatorname{mod}4)$ or $n\equiv0$
$(\operatorname{mod}4)$. In addition we observe that $n$ must be even.

This is not the case for complex \emph{conference} matrices (\ref{2}). They were used in \cite{D} to provide
parametrization of complex Sylvester inverse orthogonal matrices. It is quoted
in \cite{D}, and easy to show, that there is no complex conference matrix of
order $3$; however we can find such a matrix of order $5$ \cite{D}. Some complex conference matrices of even orders can be easily constructed.
For example starting from a real symmetric (or skew symmetric) \emph{conference} matrix $C$ multiplication by the complex number $i$ yields a complex symmetric (or Hermitian) \emph{conference} matrix
of even order. However no method to construct complex conference matrices of odd orders is known yet.

In this article, we construct an infinite family of complex symmetric \emph{conference}
matrices of odd orders from which we deduce an infinite family of \emph{equi-isoclinic} planes in Euclidean odd dimensional spaces. The Paley real symmetric \emph{conference} matrices are  obtained from the  Paley symmetric matrices $P$ of order $2k-1=p^{\alpha}$, $p$ odd prime, $\alpha$ non-negative integer, such that
\begin{center}
$P^{2}=(2k-1)I-J$, and
\end{center}
\begin{center}
$PJ=JP=0$.
\end{center}
It appears that for each order for which there exists a symmetric Paley matrix there exists a complex symmetric \emph{conference} matrix with that order.
Although the two constructions are both obtained by use of the  Legendre symbol of the Galois field $GF(p^{\alpha})$ they differ by use of the Jacobsthal's theorem (\cite{W})
in the Paley case and of an analogous theorem given below (\ref{5}) in our case. Complex \emph{conference} matrices are also important in complex Hadamard matrix theory because if
$C_{n}$ is a complex conference matrix of order $n$ then by construction the matrix
\begin{equation}
H_{2n}=\left(
\begin{array}
[c]{cc}%
C_{n}+I_{n} & C_{n}^{\ast}-I_{n}\\
C_{n}-I_{n} & -C_{n}^{\ast}-I_{n}%
\end{array}
\right) \label{3},
\end{equation}
is a complex Hadamard matrix of order $2n$.
A matrix of order $n$ with unimodular complex entries and satisfying $HH^{\ast}%
=nI_{n}$ is called \textit{complex Hadamard}.

The maximum $5$-tuple in $\mathbb{R}^{5}$ shares with the five-order complex symmetric \emph{conference} matrix the property that the square of its Seidel's associated matrix is a
multiple of the identity matrix.  This property is here generalized as follows: for any integer $k$ such that $2k=p^{\alpha}+1\equiv2$
$(\operatorname{mod}4)$, $p$ odd prime, $\alpha$ non-negative integer on the one hand we construct new, previously
unknown infinite family of complex symmetric conference matrices of order $2k-1$ and on the other hand we prove that $v_{\frac{1}{2k-2}}\left( 2,2k-1\right) =2k-1$.
It turns out that each complex symmetric conference matrix of order $2k-1$ depends on a unit complex number $\omega_{0}$ such that $\Re(\omega_{0}^{2})=\frac{2-k}{k-1}$.

\subsection{Seidel's matrices}

The main tool for the determination of $v\left( 2,r\right) $ is the notion
of Seidel's matrices. If the planes $\Gamma {_{1}}$,$\Gamma {_{2}}$,...,$%
\Gamma {_{n}}$ are each provided with an orthonormal basis, then the matrix $%
A{_{ij}}$ built up by the inner products of vectors in the basis of $\Gamma {%
_{i}}$ with vectors in the basis of $\Gamma {_{j}}$ satisfies $A{_{ij}}A{%
_{ji}}=\lambda I_{2}$, $i\neq j=1$,...,$n$ \cite{LS1}. Hence the block matrix%
\begin{equation*}
A=\left(
\begin{array}{cccc}
I_{2} & A_{12} & \cdots  & A_{1n} \\
A_{21} & I_{2} & \ddots  & \vdots  \\
\vdots  & \ddots  & \ddots  & A_{n-1~n} \\
A_{n1} & \cdots  & A_{n~n-1} & I_{2}%
\end{array}%
\right) \text{,}
\end{equation*} %
of order $2n$, is symmetric positive semi-definite and of rank $r$. Thus,
in order to investigate $n$-tuples of \textit{equi-isoclinic} planes
with parameter $\lambda$ in $\mathbb{R}^{r}$ we ask for symmetric block matrices,
that are positive semi-definite with rank $r$, i.e. that have the smallest
eigenvalue $0$ with multiplicity $2n-r$. In other words, we ask for
symmetric matrices $S=\frac{1}{\sqrt\lambda}(A-I_{2n})$ whose smallest eigenvalue has
multiplicity $2n-r$.

The matrix $S$ is then partitioned into square blocks $(S_{ij})$ of order $2$
with $S_{ii}=0\in M_{2}\left( \mathbb{R}\right) $ for all $i=1$,~...,$~n$,
and $S_{ij}\in O(2)$ for all $i\neq j$.

Such matrices are called Seidel's matrices in memory of J.J. Seidel, who,
together with P.W.H. Lemmens, has first introduced them.

Conversely, any Seidel's matrix with smallest eigenvalue $\mu _{0}$ whose
multiplicity is $2n-r$ can always be obtained from some $n$-tuple of \textit{%
equi-isoclinic} planes with the parameter $\lambda=\frac{1}{\mu _{0}^{2}}$
imbedded in $\mathbb{R}^{r}$ \cite{LS1}.

Furthermore an $n$-tuple of \emph{equi-isoclinic} planes is not characterized by a
single matrix. We will say that two Seidel's matrices $S$ and $S^{\prime }$
are equivalent if and only if they are associated to the same $n$-tuple. It is
proved that two Seidel's matrices $S=\left( S_{ij}\right) _{ij}$ and $%
S^{\prime }=\left( S_{ij}^{\prime }\right) _{ij}$ are equivalent if and only
if there exit $n$ matrices $P_{1},\ldots ,P_{n}\in O\left( 2\right) $ such
that $P_{i}^{T}S_{ij}P_{j}=S_{ij}^{\prime }$ \cite{Id}, \cite{EF}.

By multiplication of each block column $j=2,...,n$ by $S_{j1}$ and
corresponding row by $S_{1j}$ each Seidel's matrix is equivalent to one
which has\textbf{\ }(apart from $S_{11}=0$) $I_{2}$'s on the first row and
column. This matrix is said to be \textit{normal}. It has been shown in \cite%
{EF} that two \emph{normal} Seidel's matrices $S$ and $S^{\prime }$ are equivalent
if and only if there is a matrix $P\in O\left( 2\right) $ such that each
block of $S$ is conjugated to the corresponding block of $S^{^{\prime }}$ by
$P$.

The relationship between Seidel's matrices associated  to \emph{equi-isoclinic} planes in Euclidean odd dimensional spaces and the odd orders complex
symmetric \emph{conference} matrices is the following. Let $C$ be any complex symmetric \emph{conference} matrix of odd order $n$,
then replace each zero of the diagonal by the zero matrix of order two and each all other non diagonal entrie which is a unit complex number $e^{i\theta_{\alpha\beta}}$ by the two-order matrix $\left(
                                                             \begin{array}{cc}
                                                               \cos(\theta_{\alpha\beta}) & \sin(\theta_{\alpha\beta}) \\
                                                               \sin(\theta_{\alpha\beta}) & -\cos(\theta_{\alpha\beta}) \\
                                                             \end{array}
                                                           \right),$
the matrix of a plane symmetry denoted by $s_{\theta_{\alpha\beta}}$ . This relationship is performed by showing that the obtained matrix $S$ satisfies $S^{2}=(n-1)I_{2n}$.

In section $1$ we construct our infinite family of complex symmetric \emph{conference} matrices of odd orders. Section $2$ is devoted to the construction of Seidel's marices associated to tuples of \emph{equi-isoclinic} planes which generate Euclidean spaces of odd dimension.

As for the notations, $I$ and $J$ denote, the unit and  all-one matrices. The matrix $A^{*}$ denotes the adjoint matrix of $A$ and, unless otherwise specified, all matrices are of order $q=2k-1$.
\section{Constructing odd orders complex symmetric conference matrices from finite fields}

\subsection{Finite fields and the Legendre Symbol}
Let $GF(q)$ be the Galois field of order $q=p^{\alpha}$, $p^{\alpha}\equiv1$ $(\operatorname{mod}4)$, $p$ odd prime, $\alpha$ non-negative integer. Let
$\varkappa$ denote the Legendre symbol, defined by $\varkappa(0)=0$,
$\varkappa(x)=1$ or $-1$ according as $x$ is or not a square in $GF(p^{\alpha
})$. It is well known that $\varkappa(-1)=1$,
$\varkappa(xy)=\varkappa(x)\varkappa(y)$,
$\varkappa(x^{-1})=(\varkappa(x))^{-1}$, $\varkappa(-x)=\varkappa(x)$ and $\sum\limits_{x \in GF(q)}^{}\varkappa(x)=0$.
Let $q=2k-1$, $\omega$ be any complex number of modulus one, $(a_{\alpha})$, $\alpha=1,...,q$ be the elements of $GF(q)$, and define the square matrix $C(\omega)$ of order $q$ by
\begin{center}
$c_{\alpha\alpha}=0$, $\alpha=1,...,q$, and
\end{center}
\begin{equation}
c_{\alpha\beta}=\omega^{\varkappa(a_{\alpha}-a_{\beta})},\\ \alpha\neq\beta,\\ \alpha,\beta=1,...,q \label{4} .
\end{equation}
\subsection{An amazing formula}
Here is an analogous theorem to Jacobsthal's theorem which was used by Paley in his construction of real symmetric conference matrices.
\begin{theorem}
For any $b\in GF(q)^{*}$ we have
\begin{equation}
\sum\limits_{a \in GF(q)^{*}\backslash \{-b\}
}^{}\omega^{\varkappa(a)-\varkappa(a+b)}=k-2+(k-1)\Re(\omega^{2}) \label{5}.
\end{equation}
\end{theorem}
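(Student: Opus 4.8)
The plan is to split the sum according to the sign pattern of the pair $(\varkappa(a),\varkappa(a+b))$ and to reduce everything to a $4\times4$ linear system fed by classical character‑sum identities. First I would note that on the index set $GF(q)^{*}\backslash\{-b\}$ both $a$ and $a+b$ are nonzero, so $\varkappa(a),\varkappa(a+b)\in\{1,-1\}$ and the exponent $\varkappa(a)-\varkappa(a+b)$ can only take the values $0$, $2$, $-2$. Writing $N_{\varepsilon\delta}$ for the number of $a$ in the index set with $\varkappa(a)=\varepsilon$ and $\varkappa(a+b)=\delta$ (for $\varepsilon,\delta\in\{+,-\}$), the sum equals
\begin{equation*}
(N_{++}+N_{--})+N_{+-}\,\omega^{2}+N_{-+}\,\omega^{-2}.
\end{equation*}
Since $|\omega|=1$ we have $\omega^{-2}=\overline{\omega^{2}}$, so once I show $N_{+-}=N_{-+}$ and evaluate $N_{++}+N_{--}$ and $N_{+-}$, the term $N_{+-}(\omega^{2}+\omega^{-2})=2N_{+-}\Re(\omega^{2})$ produces exactly the shape in \eqref{5}.

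Next I would record four linear relations among the $N_{\varepsilon\delta}$. The trivial one is $N_{++}+N_{+-}+N_{-+}+N_{--}=q-2$. Using $\sum_{x\in GF(q)}\varkappa(x)=0$, $\varkappa(0)=0$ and $\varkappa(-b)=\varkappa(b)$, summing $\varkappa(a)$ over the index set gives $-\varkappa(b)$, i.e. $N_{++}+N_{+-}-N_{-+}-N_{--}=-\varkappa(b)$; likewise, since $a+b$ runs over $GF(q)\backslash\{0,b\}$ as $a$ runs over the index set, summing $\varkappa(a+b)$ gives $-\varkappa(b)$, i.e. $N_{++}-N_{+-}+N_{-+}-N_{--}=-\varkappa(b)$. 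The essential input is the Jacobsthal‑type evaluation $\sum_{a\in GF(q)}\varkappa(a)\varkappa(a+b)=-1$, which I would get by writing $\varkappa(a)\varkappa(a+b)=\varkappa(a^{2})\varkappa(1+ba^{-1})=\varkappa(1+ba^{-1})$ for $a\neq0$, letting $a$ run over $GF(q)^{*}$ so that $1+ba^{-1}$ runs over $GF(q)\backslash\{1\}$, and invoking $\sum\varkappa=0$ once more; deleting the two vanishing terms $a=0$ and $a=-b$ leaves $N_{++}+N_{--}-N_{+-}-N_{-+}=-1$ on the index set.

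Finally I would solve the system. Subtracting the two relations that are linear in $\varkappa$ gives $N_{+-}=N_{-+}$; adding the total‑count relation to the $\varkappa(a)\varkappa(a+b)$ relation gives $2(N_{++}+N_{--})=q-3$, hence, since $q=2k-1$, $N_{++}+N_{--}=k-2$ and therefore $N_{+-}+N_{-+}=k-1$, so $N_{+-}=N_{-+}=\tfrac{k-1}{2}$ (an integer since $q\equiv1\pmod4$ forces $k$ odd). Substituting back yields
\begin{equation*}
\sum_{a\in GF(q)^{*}\backslash\{-b\}}\omega^{\varkappa(a)-\varkappa(a+b)}=(k-2)+\tfrac{k-1}{2}\bigl(\omega^{2}+\omega^{-2}\bigr)=k-2+(k-1)\Re(\omega^{2}),
\end{equation*}
which is \eqref{5}. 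I do not expect a serious obstacle here: the only nonroutine ingredient is the Jacobsthal‑type identity, and the remainder is careful bookkeeping of the excluded arguments $0$ and $-b$ together with the listed properties of $\varkappa$; the hypothesis $q\equiv1\pmod4$ enters only through $\varkappa(-1)=1$.
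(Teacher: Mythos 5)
Your proof is correct, but it is organized differently from the paper's. The paper makes the substitution $z=1+\frac{b}{a}$, rewrites the exponent as $\varkappa(b)\frac{1-\varkappa(z)}{\varkappa(z-1)}$, argues that the resulting sum over $z\in GF(q)^{*}\backslash\{1\}$ is independent of $b$ (via $z\mapsto z^{-1}$), and then counts directly how often the exponent is $0$, $2$, $-2$: the $k-2$ squares $z\neq 1$ give the constant term, and pairing each non-square $z$ with its inverse gives $\frac{k-1}{2}$ occurrences of each of $\omega^{2}$ and $\omega^{-2}$. You instead classify the indices by the sign pattern $(\varkappa(a),\varkappa(a+b))$ and solve a small linear system for the counts $N_{\varepsilon\delta}$, with the classical Jacobsthal-type evaluation $\sum_{a}\varkappa(a)\varkappa(a+b)=-1$ as the one nontrivial input (whose proof, note, uses the very same substitution $1+ba^{-1}$ that drives the paper's argument, so the two proofs are close cousins; amusingly, the paper presents its formula as an \emph{analogue} of Jacobsthal's theorem, whereas you derive it \emph{from} Jacobsthal's identity plus bookkeeping). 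Your route has the merit of isolating exactly where $q\equiv 1\ (\operatorname{mod}4)$ enters (through $\varkappa(-b)=\varkappa(b)$, forcing $N_{+-}=N_{-+}$) and of reducing the $b$-independence to linear algebra; the paper's route manipulates the $\omega$-sum itself, which is what allows it to be reused verbatim for the matrix-valued version in Theorem 4, where $\omega^{\pm 2}$ is replaced by the rotations $r_{\pm 2\theta}$ and the same counts give $(k-2+(k-1)\cos(2\theta))I_{2}$; your argument would adapt just as well there, since it only uses the counts $N_{\varepsilon\delta}$ and the identity $r_{2\theta}+r_{-2\theta}=2\cos(2\theta)I_{2}$.
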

\begin{proof}
Define $z=1+\frac{b}{a}$. As $a$ ranges over all non-zero elements of GF(q) except $-b$, $z$ ranges over all non-zero elements of GF(q) except unity.
However
\begin{center}
$\frac{1-\varkappa(z)}{\varkappa(z-1)}=\frac{\varkappa(a)-\varkappa(a+b)}{\varkappa(b)}$.
\end{center}
Thus
\begin{center}
$\sum\limits_{a \in GF(q)^{*}\backslash \{-b\}}^{}\omega^{\varkappa(a)-\varkappa(a+b)}=\sum\limits_{z \in GF(q)^{*}\backslash \{1\}}^{}\omega^{\varkappa(b)\frac{1-\varkappa(z)}{\varkappa(z-1)}}$.
\end{center}
We claim that this last sum is independent of $b$. Indeed
\begin{center}
$\sum\limits_{z \in GF(q)^{*}\backslash \{1\}}^{}\omega^{-\frac{1-\varkappa(z)}{\varkappa(z-1)}}=\sum\limits_{z \in GF(q)^{*}\backslash \{1\}}^{}\omega^{\frac{1-\varkappa(z)}{\varkappa(z-1)}}$,
\end{center}
put in the first sum $z'=\frac{1}{z}$.

Recall that there are $k-1$ non-zero squares and $k-1$ non-zero non-squares. Clearly our sum is then of the form
\begin{center}
$r.1+s\omega^{2}+t\omega^{-2}$.
\end{center}
Whence $r=k-2$. Now $\omega^{2}$ and $\omega^{-2}$ appear in pairs because if a non-square $z$ yields $\omega^{2}$ (respectively $\omega^{-2}$) then
its inverse which is also a non-square yields $\omega^{-2}$ (respectively $\omega^{2}$). According $s=t=\frac{k-1}{2}$. This proves the assertion.
\end{proof}
\subsection{Complex symmetric conference matrices of odd orders}
\begin{theorem}
The complex symmetric matrix $C(\omega)$ of order $2k-1$ satisfies
\begin{center}
$CC^{*}=(2k-2-c)I+cJ$, with
\end{center}
\begin{center}
$c=k-2+(k-1)\Re(\omega^{2})$.
\end{center}
\end{theorem}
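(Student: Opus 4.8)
The plan is to compute the entries of $CC^{*}$ directly from the definition \eqref{4} and to recognize that the off-diagonal entries are exactly the sums handled by Theorem \ref{5}. Since $C=C(\omega)$ is symmetric with unimodular off-diagonal entries and zero diagonal, the $(\alpha,\alpha)$ entry of $CC^{*}$ is $\sum_{\gamma\neq\alpha}|c_{\alpha\gamma}|^{2}=q-1=2k-2$, which matches $(2k-2-c)+c$, the diagonal entry of $(2k-2-c)I+cJ$. So the diagonal is immediate and the whole content is in the off-diagonal entries.

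For $\alpha\neq\beta$ the $(\alpha,\beta)$ entry of $CC^{*}$ is $\sum_{\gamma}c_{\alpha\gamma}\overline{c_{\beta\gamma}}$, and since the diagonal terms $\gamma=\alpha$ and $\gamma=\beta$ contribute $0$, this equals $\sum_{\gamma\neq\alpha,\beta}\omega^{\varkappa(a_{\alpha}-a_{\gamma})}\overline{\omega^{\varkappa(a_{\beta}-a_{\gamma})}}$. Because $\omega$ has modulus one, $\overline{\omega^{\varkappa(a_{\beta}-a_{\gamma})}}=\omega^{-\varkappa(a_{\beta}-a_{\gamma})}$, so the sum becomes $\sum_{\gamma\neq\alpha,\beta}\omega^{\varkappa(a_{\alpha}-a_{\gamma})-\varkappa(a_{\beta}-a_{\gamma})}$. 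Now I would change variables: set $a=a_{\alpha}-a_{\gamma}$ and $b=a_{\beta}-a_{\alpha}\in GF(q)^{*}$, so that $a_{\beta}-a_{\gamma}=a+b$; as $\gamma$ runs over all indices with $\gamma\neq\alpha,\beta$, the variable $a$ runs over $GF(q)^{*}\setminus\{-b\}$ (the exclusion $a\neq 0$ comes from $\gamma\neq\alpha$ and $a\neq -b$ from $\gamma\neq\beta$). Hence the $(\alpha,\beta)$ entry equals exactly $\sum_{a\in GF(q)^{*}\setminus\{-b\}}\omega^{\varkappa(a)-\varkappa(a+b)}$, which by Theorem \ref{5} equals $k-2+(k-1)\Re(\omega^{2})=c$, independent of the pair $(\alpha,\beta)$. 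This is precisely the off-diagonal entry of $(2k-2-c)I+cJ$, completing the identification.

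Finally I would note that $C(\omega)$ is genuinely symmetric: $c_{\beta\alpha}=\omega^{\varkappa(a_{\beta}-a_{\alpha})}=\omega^{\varkappa(-(a_{\alpha}-a_{\beta}))}=\omega^{\varkappa(a_{\alpha}-a_{\beta})}=c_{\alpha\beta}$ using $\varkappa(-x)=\varkappa(x)$, which also justifies writing $CC^{*}$ rather than worrying about $C^{*}C$. The main obstacle, such as it is, is getting the index bookkeeping in the change of variables exactly right — confirming that excluding $\gamma=\alpha$ and $\gamma=\beta$ corresponds precisely to excluding $a=0$ and $a=-b$, so that the inner sum is literally the one in \eqref{5} with no missing or extra terms. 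Everything else is routine once Theorem \ref{5} is invoked, and in particular the crucial fact that the result is independent of $b$ is already built into \eqref{5}.
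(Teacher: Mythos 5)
Your proof is correct and follows essentially the same route as the paper: compute the entries of $CC^{*}$ directly, note the diagonal entries equal $2k-2$, and reduce each off-diagonal entry via the change of variables $a=a_{\alpha}-a_{\gamma}$, $b=a_{\beta}-a_{\alpha}$ (using $\varkappa(-x)=\varkappa(x)$) to the sum evaluated in Theorem 1. Your explicit bookkeeping of the excluded indices and the symmetry check are details the paper leaves implicit, but the argument is the same.
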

\begin{proof}
Let us denote $(d_{\alpha\beta})$ for $\alpha,\beta=1,...,q$ the entries of the product $CC^{*}$. Then for $\alpha=1,...,q$, $d_{\alpha\alpha}=q-1$  and for $\alpha,\beta=1,...,q$,
$\alpha\neq\beta$,
\begin{center}
$d_{\alpha\beta}=\sum\limits_{a_{\gamma} \in GF(q)\backslash \{a_{\alpha},a_{\beta}\}^{}}\omega^{\varkappa(a_{\alpha}-a_{\gamma})-\varkappa(a_{\gamma}-a_{\beta})}$.
\end{center}
Clearly
\begin{center}
$d_{\alpha\beta}=\sum\limits_{a \in GF(q)^{*} \backslash \{-b\}}^{}\omega^{\varkappa(a)-\varkappa(a+b)}$,
\end{center}
with $b\in GF(q)^{*}$. According to (\ref{5})
\begin{center}
$d_{\alpha\beta}=k-2+(k-1)\Re(\omega^{2})$.
\end{center}
This completes the proof of the theorem.
\end{proof}

As a consequence we obtain the following the result.
\begin{corollary}
The complex symmetric matrix $C(\omega_{0})$ of order $2k-1$ and with $\Re(\omega_{0}^{2})=\frac{2-k}{k-1}$ satisfies
\begin{equation}
CC^{*}=(2k-2)I \label{6}.
\end{equation}
\end{corollary}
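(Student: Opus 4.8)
The plan is to derive the Corollary directly from the preceding Theorem, so there is essentially no new mathematical content—only a verification that the chosen value of $\Re(\omega_{0}^{2})$ makes the off-diagonal contribution vanish. First I would invoke the Theorem, which states that $C(\omega)C(\omega)^{*}=(2k-2-c)I+cJ$ with $c=k-2+(k-1)\Re(\omega^{2})$. The only thing to check is that for $\omega=\omega_{0}$ with $\Re(\omega_{0}^{2})=\frac{2-k}{k-1}$ one gets $c=0$: indeed
\begin{center}
$c=k-2+(k-1)\cdot\frac{2-k}{k-1}=k-2+(2-k)=0$,
\end{center}
and therefore $C(\omega_{0})C(\omega_{0})^{*}=(2k-2-0)I+0\cdot J=(2k-2)I$, which is exactly \eqref{6}.

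A secondary point worth addressing—and the only place where anything could go wrong—is whether such an $\omega_{0}$ actually exists, i.e. whether the prescribed value $\frac{2-k}{k-1}$ can be realized as $\Re(\omega_{0}^{2})$ for some unit complex number $\omega_{0}$. Since $\omega_{0}$ has modulus one, $\omega_{0}^{2}$ also has modulus one, so $\Re(\omega_{0}^{2})$ ranges over the full interval $[-1,1]$; hence one needs $-1\le\frac{2-k}{k-1}\le 1$. For $k\ge 2$ the denominator $k-1$ is positive, and one checks $\frac{2-k}{k-1}\le 1\iff 2-k\le k-1\iff k\ge \tfrac{3}{2}$, while $\frac{2-k}{k-1}\ge -1\iff 2-k\ge -(k-1)=1-k\iff 2\ge 1$, which always holds. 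So for every integer $k\ge 2$ the value lies in $[-1,1]$ and a suitable unimodular $\omega_{0}$ exists (explicitly, write $\omega_{0}^{2}=\tfrac{2-k}{k-1}+i\sqrt{1-\bigl(\tfrac{2-k}{k-1}\bigr)^{2}}$ and take a square root). In the regime relevant to the paper, $2k=p^{\alpha}+1\equiv 2\pmod 4$ forces $k$ odd and $k\ge 3$, so this is comfortably satisfied.

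The main obstacle, then, is not an obstacle at all: the work was already done in Theorem 2, and the Corollary is the specialization $c=0$. I would keep the proof to two or three lines—state the substitution, note $c=0$, conclude $CC^{*}=(2k-2)I$—and, if desired for completeness, append the one-line remark that $\frac{2-k}{k-1}\in[-1,1]$ guarantees that $\omega_{0}$ exists. No further estimates, no case analysis, and no appeal to the finite-field structure beyond what Theorem 2 already encapsulates are needed.
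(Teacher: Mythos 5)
Your proposal is correct and matches the paper, which presents this corollary as an immediate consequence of Theorem 2 (the substitution $\Re(\omega_{0}^{2})=\frac{2-k}{k-1}$ gives $c=0$, hence $CC^{*}=(2k-2)I$). Your added remark that $\frac{2-k}{k-1}\in[-1,1]$, so that a unimodular $\omega_{0}$ exists, is a harmless and sensible bit of extra care, but the core argument is the same as the paper's.
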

Here are the $5$-order and the $9$-order complex symmetric conference matrices :
\begin{center}
$\left(
   \begin{array}{ccccc}
     0 & j & j^{2} & j^{2} & j \\
     j & 0 & j & j^{2} & j^{2} \\
     j^{2} & j & 0 & j & j^{2} \\
     j^{2} & j^{2} & j & 0 & j \\
     j & j^{2} & j^{2} & j & 0 \\
   \end{array}
 \right)$,
\end{center}
with $j=e^{\frac{i2\pi}{3}}$, and
\begin{center}
$\left(
  \begin{array}{ccccccccc}
    0 & \omega_{0} & \omega_{0} & \omega_{0} & \omega_{0} & \omega_{0}^{-1} & \omega_{0}^{-1} & \omega_{0}^{-1} & \omega_{0}^{-1} \\
    \omega_{0} & 0 & \omega_{0}^{-1} & \omega_{0}^{-1} & \omega_{0} & \omega_{0} & \omega_{0} & \omega_{0}^{-1} & \omega_{0}^{-1} \\
    \omega_{0} & \omega_{0}^{-1} & 0 & \omega_{0} & \omega_{0}^{-1} & \omega_{0} & \omega_{0}^{-1} & \omega_{0} & \omega_{0}^{-1} \\
    \omega_{0} & \omega_{0}^{-1} & \omega_{0} & 0 & \omega_{0}^{-1} & \omega_{0}^{-1} & \omega_{0} & \omega_{0}^{-1} & \omega_{0} \\
    \omega_{0} & \omega_{0} & \omega_{0}^{-1} & \omega_{0}^{-1} & 0 & \omega_{0}^{-1} & \omega_{0}^{-1} & \omega_{0} & \omega_{0} \\
    \omega_{0}^{-1} & \omega_{0} & \omega_{0} & \omega_{0}^{-1} & \omega_{0}^{-1} & 0 & \omega_{0} & \omega_{0} & \omega_{0}^{-1} \\
    \omega_{0}^{-1} & \omega_{0} & \omega_{0}^{-1} & \omega_{0} & \omega_{0}^{-1} & \omega_{0} & 0 & \omega_{0}^{-1} & \omega_{0} \\
    \omega_{0}^{-1} & \omega_{0}^{-1} & \omega_{0} & \omega_{0}^{-1} & \omega_{0} & \omega_{0} & \omega_{0}^{-1} & 0 & \omega_{0} \\
    \omega_{0}^{-1} & \omega_{0}^{-1} & \omega_{0}^{-1} & \omega_{0} & \omega_{0} & \omega_{0}^{-1} & \omega_{0} & \omega_{0} & 0 \\
  \end{array}
\right)$,
\end{center}

where $\omega_{0}$ is a unit complex number such that $\Re(\omega_{0}^{2})=-\frac{3}{4}$.

The following operations on the set of complex symmetric conference matrices of order $q$:
\begin{enumerate}
\item multiplication by any unit complex number of any row and the corresponding column,
\item interchange of rows and, simultaneously, of the corresponding columns,
\end{enumerate}
generate a relation , called \emph{equivalence}. The second operation alone also generates a relation called \emph{permutation equivalence}.
Concerning this equivalence relation we have the following result.
\begin{theorem}
For any $q=2k-1=p^{\alpha}$, $p$ odd prime, $k\geq3$ there exists a class of equivalent complex symmetric conference matrices of order $q$.
\end{theorem}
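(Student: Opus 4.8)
The plan is to show that the construction $C(\omega_0)$ from Corollary~5 produces, for every admissible odd order $q=2k-1=p^{\alpha}$, a complex symmetric conference matrix, and then to argue that any two such matrices one could build this way land in a single equivalence class. The first half is immediate from what precedes: by the Corollary, once $\Re(\omega_0^2)=\tfrac{2-k}{k-1}$ the matrix $C(\omega_0)$ satisfies $CC^{*}=(2k-2)I$, is symmetric by inspection of~(\ref{4}) (since $a_\alpha-a_\beta$ and $a_\beta-a_\alpha$ differ by $-1$, and $\varkappa(-1)=1$), has zero diagonal, and has unimodular off-diagonal entries; hence it is a complex symmetric conference matrix of order $q$, establishing existence. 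Note also that such an $\omega_0$ always exists, because $\tfrac{2-k}{k-1}\in(-1,0]$ for $k\geq 3$, so one may take $\omega_0=e^{i\psi}$ with $\cos 2\psi=\tfrac{2-k}{k-1}$.

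The substantive content is the uniqueness up to equivalence. I would first observe that the definition of $C(\omega)$ depends on an ordering $(a_1,\dots,a_q)$ of the elements of $GF(q)$ and on the choice of unit number $\omega$ with $\varkappa$-exponents; changing the ordering is exactly a permutation equivalence (operation~2), so without loss of generality I may fix one ordering. Next, the only remaining freedom is the choice of $\omega_0$ among the (at most four) unit complex numbers with $\Re(\omega_0^2)=\tfrac{2-k}{k-1}$: these are $\omega_0$, $\bar\omega_0$, $-\omega_0$, $-\bar\omega_0$. Replacing $\omega_0$ by $-\omega_0$ multiplies every off-diagonal entry by $-1$, which is operation~1 applied to alternate rows/columns — more carefully, multiplying row/column $\alpha$ by $i^{?}$; in fact negating all entries of a symmetric matrix with zero diagonal is achieved by multiplying every row and every corresponding column by $i$, since $i\cdot i=-1$, so $-C$ is equivalent to $C$. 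Replacing $\omega_0$ by $\bar\omega_0=\omega_0^{-1}$ sends $c_{\alpha\beta}=\omega_0^{\varkappa(a_\alpha-a_\beta)}$ to $\omega_0^{-\varkappa(a_\alpha-a_\beta)}$; since $\varkappa(x^{-1})=\varkappa(x)$ and, more to the point, there is an element $g\in GF(q)^{*}$ that is a non-square, the map $x\mapsto gx$ exchanges squares and non-squares, i.e. $\varkappa(g(a_\alpha-a_\beta))=-\varkappa(a_\alpha-a_\beta)$; composing the ordering with the affine bijection $a\mapsto ga$ therefore realizes the passage $\omega_0\mapsto\omega_0^{-1}$ as a permutation equivalence. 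Hence all four choices give equivalent matrices, and together with the ordering ambiguity this shows the whole construction yields a single equivalence class.

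The main obstacle is precisely this last point — controlling the action of Galois-field automorphisms and of multiplication by a non-square on the exponent pattern $\varkappa(a_\alpha-a_\beta)$, and checking that these maps act on the index set as permutations (so that the induced change of $C(\omega_0)$ is genuinely a permutation equivalence rather than something coarser). One must be slightly careful that $x\mapsto gx$ is $GF(q)$-linear but not affine-translation-invariant in the way needed; what is actually used is that $a_\alpha-a_\beta\mapsto g(a_\alpha-a_\beta)$ is induced by the bijection $a\mapsto ga$ on field elements, which is legitimate since the construction only ever sees \emph{differences} of the listed elements. A secondary point, worth a sentence, is to confirm that for the relevant orders $\Re(\omega_0^2)=\tfrac{2-k}{k-1}$ forces $\omega_0$ into exactly the four-element set described and no finer invariant survives the equivalence, so that ``a class'' — rather than several — is the correct statement. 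Once these are in place the theorem follows by assembling: existence from Corollary~5, and the collapse of all parameter- and ordering-choices into one class from the permutation/scaling arguments above.
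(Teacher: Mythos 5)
Your proposal is correct and follows essentially the same route as the paper: the paper likewise notes the four admissible values $\omega_{0},\omega_{0}^{-1},-\omega_{0},-\omega_{0}^{-1}$, shows $C(\omega_{0}^{-1})$ is permutation equivalent to $C(\omega_{0})$ via multiplication of the field elements by a non-square, and shows $C(-\omega_{0})=-C(\omega_{0})$ is equivalent to $C(\omega_{0})$ by scaling every row and corresponding column by $i$. Your additional remarks (existence of $\omega_{0}$, symmetry from $\varkappa(-1)=1$, reordering as permutation equivalence) only make explicit what the paper leaves implicit.
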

\begin{proof}
For any $q$ There are four possible unit complex numbers $\omega$ for which the matrices $C(\omega)$ satisfy (\ref{6}). They are $\omega_{0}$, $\omega_{0}^{-1}$, $-\omega_{0}$ and
$-\omega_{0}^{-1}$. Consider the matrix $C(\omega_{0}^{-1})$ whose non diagonal entries are
\begin{center}
$c_{\alpha\beta}=\omega_{0}^{-\varkappa(a_{\alpha}-a_{\beta})}$, $\alpha\neq\beta$, $\alpha,\beta=1,...,q$.
\end{center}
Then for some non-square $a_{\gamma}$ all $a_{\alpha}a_{\gamma}$ are distinct and $C(\omega_{0}^{-1})=(\omega_{0}^{\varkappa(a_{\alpha}a_{\gamma}-a_{\beta}a_{\gamma})})$
is permutation equivalent to $C(\omega_{0})$. Consider now the matrix $C(-\omega_{0})$. Using our definition (\ref{4}) it is easily seen that the matrix $C(-\omega_{0})$ is
equal to $-C(\omega_{0})$ which is clearly equivalent to $C(\omega_{0})$, by multiplication
by the complex number $i$ of each row and of the corresponding column. This completes the proof of the theorem.
\end{proof}
\section{Seidel's matrices with plane symmetries }

\subsection{Seidel's matrices and a second amazing formula}
Let us denote $\omega_{0}=e^{i\theta}$ with $\cos(2\theta)=\frac{2-k}{k-1}$ and define the block matrix $S$ of order $2(2k-1)$ as follows :
\begin{center}
$S_{\alpha\alpha}=\left(
                     \begin{array}{cc}
                       0 & 0 \\
                       0 & 0 \\
                     \end{array}
                   \right)$,
\end{center}
$\alpha=1,...,q$, and
\begin{center}
$S_{\alpha\beta}=\left(
                   \begin{array}{cc}
                     \cos(\theta\varkappa(a_{\alpha}-a_{\beta})) & \sin(\theta\varkappa(a_{\alpha}-a_{\beta})) \\
                     \sin(\theta\varkappa(a_{\alpha}-a_{\beta})) & -\cos(\theta\varkappa(a_{\alpha}-a_{\beta})) \\
                   \end{array}
                 \right)$,
\end{center}
$\alpha\neq\beta$, $\alpha,\beta=1,...,q$.
As in case of complex symmetric conference matrices of odd orders we need the following formula.
\begin{theorem}
Let $\theta$ be any real number, $r_{\eta}$ the plane rotation with angle $\eta$ and $b \in GF(q)^{*}$. Then
\begin{equation}
\sum\limits_{a \in GF(q)^{*}\backslash \{-b\}}^{}r_{\theta(\varkappa(a)-\varkappa(a+b))}=(k-2+(k-1)\cos(2\theta))I_{2} \label{7}.
\end{equation}
\end{theorem}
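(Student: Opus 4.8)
The plan is to reduce the claimed rotation identity to the scalar identity (\ref{5}) already established, by expressing each plane rotation $r_{\theta(\varkappa(a)-\varkappa(a+b))}$ in terms of a complex exponential. Recall that a rotation by angle $\eta$ of the plane, under the identification of $\mathbb{R}^{2}$ with $\mathbb{C}$, is multiplication by $e^{i\eta}$; equivalently, writing $r_{\eta}=\cos(\eta)I_{2}+\sin(\eta)K$ where $K=\left(\begin{smallmatrix}0&-1\\1&0\end{smallmatrix}\right)$, we have $K^{2}=-I_{2}$ so the matrices $\{aI_{2}+bK\}$ form a field isomorphic to $\mathbb{C}$. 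Under this isomorphism $r_{\eta}\mapsto e^{i\eta}$, the sum on the left of (\ref{7}) maps to $\sum_{a\in GF(q)^{*}\setminus\{-b\}}\omega^{\varkappa(a)-\varkappa(a+b)}$ with $\omega=e^{i\theta}$, which by Theorem (\ref{5}) equals $k-2+(k-1)\Re(\omega^{2})=k-2+(k-1)\cos(2\theta)$.

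The only subtlety is that the target of the isomorphism is the scalar $k-2+(k-1)\cos(2\theta)$, a real number, which pulls back to $(k-2+(k-1)\cos(2\theta))I_{2}$ — precisely the right-hand side of (\ref{7}). So the key steps, in order, are: first, fix the ring isomorphism $\Phi\colon \mathbb{C}\to \{aI_{2}+bK : a,b\in\mathbb{R}\}$, $\Phi(a+ib)=aI_{2}+bK$, and verify $\Phi(e^{i\eta})=\cos(\eta)I_{2}+\sin(\eta)K=r_{\eta}$; second, observe $\Phi$ is additive, hence commutes with finite sums, so $\sum_{a}r_{\theta(\varkappa(a)-\varkappa(a+b))}=\Phi\bigl(\sum_{a}\omega^{\varkappa(a)-\varkappa(a+b)}\bigr)$; third, apply (\ref{5}) to evaluate the inner complex sum; fourth, note $\Phi(c)=cI_{2}$ for $c\in\mathbb{R}$, giving the claimed formula. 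A one-line alternative avoiding the isomorphism language is simply to split each $r_{\eta}$ into its $I_{2}$ and $K$ components and match real and imaginary parts of (\ref{5}): the $I_{2}$-coefficient is $\sum_{a}\cos(\theta(\varkappa(a)-\varkappa(a+b)))=\Re\bigl(\sum_{a}\omega^{\varkappa(a)-\varkappa(a+b)}\bigr)=k-2+(k-1)\cos(2\theta)$, while the $K$-coefficient is $\sum_{a}\sin(\theta(\varkappa(a)-\varkappa(a+b)))=\Im\bigl(\sum_{a}\omega^{\varkappa(a)-\varkappa(a+b)}\bigr)=0$, the latter because (\ref{5}) shows the sum is real.

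I do not expect a genuine obstacle here: the whole content is transporting (\ref{5}) through the standard $\mathbb{C}\cong SO(2)\cup\{0\}\cdot\mathbb{R}$-matrix correspondence, and the single thing worth checking carefully is that the exponents $\varkappa(a)-\varkappa(a+b)\in\{-2,-1,0,1,2\}$ are integers, so that $r_{\theta n}=(r_{\theta})^{n}$ corresponds correctly to $\omega^{n}$ under $\Phi$ — this is immediate since $\varkappa$ takes values in $\{-1,0,1\}$. The mild "surprise" the authors flag (the right side being a scalar multiple of $I_{2}$ rather than a general $2\times 2$ matrix) is exactly the reflection of the fact, already proved, that the scalar sum in (\ref{5}) is real, i.e. has vanishing imaginary/$K$-part.
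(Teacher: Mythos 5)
Your argument is correct, and it takes a slightly different route from the paper. The paper's own proof does not invoke Theorem~1 as a black box: it re-runs the Theorem~1 argument ``word by word'' at the matrix level (the substitution $z=1+\tfrac{b}{a}$, the count of $k-1$ squares and $k-1$ non-squares, the pairing of $z$ with $z^{-1}$), arriving at $(k-2)I_{2}+\tfrac{k-1}{2}\left(r_{2\theta}+r_{-2\theta}\right)$ and finishing with the single new identity $r_{2\theta}+r_{-2\theta}=2\cos(2\theta)I_{2}$. You instead transport the already-proved scalar identity (\ref{5}) through the embedding $\Phi\colon a+ib\mapsto aI_{2}+bK$, $K=\left(\begin{smallmatrix}0&-1\\1&0\end{smallmatrix}\right)$, under which $e^{i\eta}\mapsto r_{\eta}$, using additivity of $\Phi$ and the fact that the scalar sum in (\ref{5}) is real (equivalently, matching the $I_{2}$- and $K$-components and noting the $K$-part vanishes). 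Both proofs are sound; yours makes explicit why no new combinatorial work is needed and why the answer is a multiple of $I_{2}$ (the vanishing imaginary part of (\ref{5})), while the paper's is self-contained at the matrix level and isolates exactly the one matrix identity that replaces $\omega^{2}+\omega^{-2}=2\cos(2\theta)$. Two cosmetic remarks: the exponents $\varkappa(a)-\varkappa(a+b)$ actually lie in $\{-2,0,2\}$ (both $a$ and $a+b$ are nonzero on the index set), which is even better than the range you state, and your shorthand identification of the target algebra is loose --- the image of $\Phi$ is $\mathbb{R}I_{2}\oplus\mathbb{R}K$, the algebra of scaled rotations --- but neither point affects the validity of the argument.
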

\begin{proof}
The proof is the same word by word as the proof of Theorem $1$. For the last statement we just need the equality $r_{2\theta}+r_{-2\theta}=2\cos(2\theta)I_{2}$.
\end{proof}
\begin{theorem}
The matrix $S$ of order $4k-2$ is symmetric and satisfies
\begin{equation}
S^{2}=(2k-2)I_{4k-2} \label{8}.
\end{equation}
\end{theorem}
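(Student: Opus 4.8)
The plan is to compute $S^{2}$ block by block, using two elementary facts about $O(2)$ together with formula (\ref{7}).

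\emph{Symmetry.} For $\alpha\neq\beta$ the block $S_{\alpha\beta}$ is the matrix of a plane symmetry, hence symmetric; moreover $q=2k-1\equiv1$ $(\operatorname{mod}4)$ gives $\varkappa(-1)=1$, so $\varkappa(a_{\beta}-a_{\alpha})=\varkappa(a_{\alpha}-a_{\beta})$ and therefore $S_{\beta\alpha}=S_{\alpha\beta}=S_{\alpha\beta}^{T}$. Since the diagonal blocks are $0$, this yields $S^{T}=S$.

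\emph{The product $S^{2}$.} The $(\alpha,\beta)$ block of $S^{2}$ is $\sum_{\gamma=1}^{q}S_{\alpha\gamma}S_{\gamma\beta}$; the terms $\gamma=\alpha$ and $\gamma=\beta$ vanish because $S_{\alpha\alpha}=S_{\beta\beta}=0$. If $\alpha=\beta$, each remaining $S_{\alpha\gamma}$ is an involutive symmetry, so $S_{\alpha\gamma}^{2}=I_{2}$, and there are $q-1=2k-2$ of them; hence the diagonal block equals $(2k-2)I_{2}$. If $\alpha\neq\beta$, I would use the identity $s_{\phi}s_{\psi}=r_{\phi-\psi}$ (the composite of two plane symmetries with parameters $\phi$ and $\psi$ is the plane rotation of angle $\phi-\psi$; a two-line matrix verification). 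This gives $S_{\alpha\gamma}S_{\gamma\beta}=r_{\theta(\varkappa(a_{\alpha}-a_{\gamma})-\varkappa(a_{\gamma}-a_{\beta}))}$, so the $(\alpha,\beta)$ block of $S^{2}$ is $\sum_{a_{\gamma}\in GF(q)\setminus\{a_{\alpha},a_{\beta}\}}r_{\theta(\varkappa(a_{\alpha}-a_{\gamma})-\varkappa(a_{\gamma}-a_{\beta}))}$. Exactly the substitution already used to evaluate $d_{\alpha\beta}$ (put $a=a_{\alpha}-a_{\gamma}$, so $a_{\gamma}-a_{\beta}=b-a$ with $b=a_{\alpha}-a_{\beta}\in GF(q)^{*}$, and use $\varkappa(-x)=\varkappa(x)$) rewrites this sum as $\sum_{a\in GF(q)^{*}\setminus\{b\}}r_{\theta(\varkappa(a)-\varkappa(a-b))}$, which is the left-hand side of (\ref{7}) with $-b$ in place of $b$. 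Since (\ref{7}) holds for every nonzero $b$, the block equals $(k-2+(k-1)\cos(2\theta))I_{2}$, and with $\cos(2\theta)=\frac{2-k}{k-1}$ the scalar is $k-2+(2-k)=0$.

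Putting the two cases together, every off-diagonal block of $S^{2}$ is $0$ and every diagonal block is $(2k-2)I_{2}$, i.e. $S^{2}=(2k-2)I_{4k-2}$. The only place requiring a little care is matching the index substitution in the off-diagonal sum to the precise shape of formula (\ref{7}); but this is word for word the bookkeeping already carried out in the computation of $CC^{*}$, so no genuine obstacle arises — the analytic content has been pushed into formula (\ref{7}), which we may assume.
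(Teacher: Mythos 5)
Your proof is correct and follows essentially the same route as the paper: diagonal blocks via the involutivity of the plane symmetries, off-diagonal blocks via $s_{\phi}s_{\psi}=r_{\phi-\psi}$ and the change of variables reducing the sum to formula (\ref{7}), which vanishes because $\cos(2\theta)=\frac{2-k}{k-1}$. The only (harmless) difference is that you carry out the substitution directly and verify the symmetry of $S$ explicitly via $\varkappa(-1)=1$, whereas the paper routes the bookkeeping through the intermediate sums of Theorems 1 and 2.
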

\begin{proof}
We denote by $T$ the block matrix $S^{2}$. Then clearly for $\alpha=1,...,2k-1$, $T\alpha\alpha=(2k-2)I_{2}$  and for $\alpha,\beta=1,...,2k-1$, $\alpha\neq\beta$,
\begin{center}
$T_{\alpha\beta}=\sum\limits_{a_{\gamma} \in GF(q)\backslash \{a_{\alpha},a_{\beta}\}}^{}s_{\theta\varkappa(a_{\alpha}-a_{\gamma})}s_{\theta\varkappa(a_{\gamma}-a_{\beta})}$.
\end{center}
However
\begin{center}
$s_{\theta\varkappa(a_{\alpha}-a_{\gamma})}s_{\theta\varkappa(a_{\gamma}-a_{\beta})}=r_{\theta(\varkappa(a_{\alpha}-a_{\gamma})-\varkappa(a_{\gamma}-a_{\beta}))}$.
\end{center}
As in Theorem $2$ we have
\begin{center}
$T_{\alpha\beta}=\sum\limits_{z \in GF(q)^{*}\backslash \{1\}}^{}r_{\theta\varkappa(b)\frac{1-\varkappa(z)}{\varkappa(z-1)}}$.
\end{center}
Thus
\begin{center}
$T_{\alpha\beta}=\sum\limits_{z \in GF(q)^{*}\backslash \{1\}}^{}r_{\theta\frac{1-\varkappa(z)}{\varkappa(z-1)}}$.
\end{center}
The same arguments as in Theorem $1$ and (\ref{7}) lead to
\begin{center}
$T_{\alpha\beta}=(k-2+(k-1)\cos(2\theta))I_{2}$.
\end{center}
Since $\cos(2\theta)=\frac{2-k}{k-1}$ we obtain $T_{\alpha\beta}=\left(
                     \begin{array}{cc}
                       0 & 0 \\
                       0 & 0 \\
                     \end{array}
                   \right)$.
\end{proof}
\begin{corollary}
$S$ has two eigenvalues with equal multiplicities.
\end{corollary}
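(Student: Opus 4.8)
The plan is to read off the spectrum of $S$ from the identity $S^{2}=(2k-2)I_{4k-2}$ proved just above (equation (\ref{8})), and then to pin down the two multiplicities using the fact that $S$ has zero diagonal, hence zero trace.

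First I would observe that $S$ is a real symmetric matrix, so it is orthogonally diagonalizable, and that $S^{2}=(2k-2)I_{4k-2}$ forces its minimal polynomial to divide $X^{2}-(2k-2)$. Since $k\geq 3$ gives $2k-2\geq 4>0$, the number $\mu_{0}=\sqrt{2k-2}$ is a positive real, and the only possible eigenvalues of $S$ are $\mu_{0}$ and $-\mu_{0}$, which are distinct. Next I would note that each diagonal block $S_{\alpha\alpha}$ is the zero matrix of order two, so every diagonal entry of $S$ vanishes and $\operatorname{tr}(S)=0$.

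Then, writing $m_{+}$ and $m_{-}$ for the multiplicities of $\mu_{0}$ and $-\mu_{0}$ respectively, diagonalizability gives $m_{+}+m_{-}=4k-2$, while evaluating the trace in an orthonormal eigenbasis gives $\operatorname{tr}(S)=\mu_{0}(m_{+}-m_{-})$. Combining $\operatorname{tr}(S)=0$ with $\mu_{0}\neq 0$ yields $m_{+}=m_{-}$, and hence $m_{+}=m_{-}=2k-1$. In particular both eigenvalues genuinely occur, so $S$ has exactly the two eigenvalues $\pm\sqrt{2k-2}$, each of multiplicity $2k-1$, which is the assertion.

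There is no real obstacle here: the argument is a one-line consequence of (\ref{8}) together with the vanishing of the diagonal of $S$. The only point worth spelling out is that this trace computation is precisely what rules out the degenerate possibility that one eigenvalue has multiplicity $4k-2$ and the other $0$, forcing instead the balanced split $2k-1,2k-1$.
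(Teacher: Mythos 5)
Your argument is correct and is exactly the paper's: the identity $S^{2}=(2k-2)I_{4k-2}$ restricts the eigenvalues to $\pm\sqrt{2k-2}$, and the vanishing trace forces the two multiplicities to be equal (each $2k-1$). You merely spell out the diagonalizability and trace bookkeeping that the paper leaves implicit.
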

The proof is a consequence from the equality (\ref{8}) satisfied by $S$ and by the fact that it has zero trace.
\subsection{Equi-isoclinic planes in Euclidean odd dimensional spaces}
\begin{corollary}
For any integer $k\geq3$ such that $2k=p^{\alpha}+1\equiv2$
$(\operatorname{mod}4)$, $p$ odd prime, $\alpha$ non-negative integer  $v_{\frac{1}{2k-2}}\left( 2,2k-1\right) =2k-1$.
\end{corollary}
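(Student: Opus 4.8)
The plan is to establish the two inequalities $v_{\frac{1}{2k-2}}(2,2k-1)\ge 2k-1$ and $v_{\frac{1}{2k-2}}(2,2k-1)\le 2k-1$ separately: the first from the Seidel matrix $S$ constructed above, the second from the Lemmens--Seidel bound (\ref{1}).

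For the lower bound I would first read off the spectrum of $S$. By (\ref{8}) we have $S^{2}=(2k-2)I_{4k-2}$, so $S$ is diagonalizable with every eigenvalue equal to $\sqrt{2k-2}$ or $-\sqrt{2k-2}$; since all diagonal blocks of $S$ vanish its trace is $0$, which forces the two eigenvalues to occur with equal multiplicities $2k-1$ (the content of the preceding corollary). In particular the smallest eigenvalue of $S$ is $\mu_{0}=-\sqrt{2k-2}$, of multiplicity $2k-1=2(2k-1)-(2k-1)$. Setting $A=I_{4k-2}+\tfrac{1}{\sqrt{2k-2}}S$, the matrix $A$ is symmetric, has $2\times2$ diagonal blocks equal to $I_{2}$ and off-diagonal blocks $A_{\alpha\beta}=\tfrac{1}{\sqrt{2k-2}}s_{\theta\varkappa(a_{\alpha}-a_{\beta})}$ satisfying $A_{\alpha\beta}A_{\beta\alpha}=\tfrac{1}{2k-2}I_{2}$ (each plane symmetry is an involution), and its eigenvalues are $1\pm1\in\{0,2\}$, so $A$ is positive semi-definite of rank $2k-1$. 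By the correspondence recalled in the Introduction, $A$ is then the Gram matrix of a $(2k-1)$-tuple of equi-isoclinic planes with parameter $\lambda=1/\mu_{0}^{2}=\frac{1}{2k-2}$ spanning $\mathbb{R}^{2k-1}$; these are genuine $2$-planes because $A_{\alpha\alpha}=I_{2}$, and the common angle $\arccos\sqrt{\lambda}$ is nonzero since $\lambda=\frac{1}{2k-2}\in(0,1)$. Hence $v_{\frac{1}{2k-2}}(2,2k-1)\ge 2k-1$.

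For the upper bound I would specialize (\ref{1}) to $r=2k-1$ and $\lambda=\frac{1}{2k-2}$. A short computation gives $2-r\lambda=\frac{2k-3}{2k-2}$, which is strictly positive for $k\ge3$, together with $r(1-\lambda)=(2k-1)\frac{2k-3}{2k-2}$; dividing (\ref{1}) by the positive quantity $\frac{2k-3}{2k-2}$ gives $v_{\frac{1}{2k-2}}(2,2k-1)\le 2k-1$. Combined with the lower bound this yields the asserted equality.

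The only genuinely new ingredient is the lower bound, and its delicate points are already in place: the hypothesis $2k=p^{\alpha}+1\equiv2\ (\operatorname{mod}4)$, i.e. $p^{\alpha}=2k-1\equiv1\ (\operatorname{mod}4)$, is exactly what guarantees $\varkappa(-1)=1$, hence the symmetry of $S$ and the validity of (\ref{8}); and $\Re(\omega_{0}^{2})=\frac{2-k}{k-1}$ lies strictly between $-1$ and $1$ for $k\ge2$, so $\omega_{0}$ (equivalently the angle $\theta$) exists. Beyond that the argument is bookkeeping; the one point worth double-checking is that the multiplicity of $\mu_{0}$, hence the rank of $A$, is exactly $2k-1$ and not smaller, so that the $2k-1$ planes span the whole of $\mathbb{R}^{2k-1}$ rather than a proper subspace --- but this is immediate from the zero-trace argument.
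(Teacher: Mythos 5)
Your proposal is correct and follows essentially the same route as the paper: the lower bound comes from the Seidel matrix $S$ of Theorem 6 (whose spectrum $\pm\sqrt{2k-2}$ with equal multiplicities $2k-1$ follows from $S^{2}=(2k-2)I_{4k-2}$ and zero trace), and the upper bound from the Lemmens--Seidel inequality (\ref{1}) specialized to $r=2k-1$, $\lambda=\frac{1}{2k-2}$. Your write-up merely makes explicit the Gram-matrix correspondence $A=I+\sqrt{\lambda}\,S$ that the paper invokes by citation.
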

\begin{proof}
The matrix $S$ is a Seidel's matrix of order $2(2k-1)$, its smallest eigenvalue $-\sqrt{2k-2}$ has multiplicity $2k-1$ and then leads to a $(2k-1)$-tuple of equi-isoclinic planes with parameter $\frac{1}{2k-2}$ and which span a $(2k-1)$-dimensional Euclidean space. It follows that
\begin{center}
$v_{\frac{1}{2k-2}}\left( 2,2k-1\right) \geq 2k-1$.
\end{center}
Now using Lemmens and Seidel's inequality (\ref{1}) we arrive to
\begin{center}
$v_{\frac{1}{2k-2}}\left( 2,2k-1\right) \leq 2k-1$,
\end{center}
and the corollary follows.
\end{proof}
\begin{corollary}
$v_{\frac{1}{4}}\left(2,5\right)= 5$, $v_{\frac{1}{8}}\left(2,9\right)= 9$, $v_{\frac{1}{12}}\left(2,13\right)= 13$....
\end{corollary}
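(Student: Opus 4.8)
The plan is to read off each equality directly from the preceding corollary, which asserts that $v_{\frac{1}{2k-2}}(2,2k-1)=2k-1$ whenever $k\geq 3$ is an integer with $2k=p^{\alpha}+1\equiv 2\ (\operatorname{mod}4)$ for some odd prime $p$ and non-negative integer $\alpha$. Thus the only work is to exhibit, for each triple listed in the statement, a value of $k$ together with a prime power $p^{\alpha}=2k-1$ meeting those hypotheses, and to note that the ellipsis is filled by running over all such $k$.

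First I would take $k=3$: then $2k-1=5=5^{1}$ is a prime power, $2k=6\equiv 2\ (\operatorname{mod}4)$, and $2k-2=4$, so the previous corollary yields $v_{\frac14}(2,5)=5$. Next $k=5$: here $2k-1=9=3^{2}$, $2k=10\equiv 2\ (\operatorname{mod}4)$, $2k-2=8$, giving $v_{\frac18}(2,9)=9$. Then $k=7$: $2k-1=13$ is prime, $2k=14\equiv 2\ (\operatorname{mod}4)$, $2k-2=12$, giving $v_{\frac{1}{12}}(2,13)=13$. The ellipsis is then filled by continuing through those odd integers $k\geq 3$ for which $2k-1$ is an odd prime power, for instance $k=9$ (where $2k-1=17$) and $k=13$ (where $2k-1=5^{2}$), and so on; the value $k=11$ is skipped since $21=3\cdot 7$ is not a prime power.

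There is no genuine obstacle here: the statement is a pure specialization of the previous corollary, and the only thing being verified is the elementary arithmetic that the relevant integers $5,9,13,\ldots$ are indeed prime powers congruent to $1$ modulo $4$. The one point worth recording explicitly is that, because $p^{\alpha}$ is odd for every odd prime $p$, the condition $2k\equiv 2\ (\operatorname{mod}4)$ is equivalent to $p^{\alpha}=2k-1\equiv 1\ (\operatorname{mod}4)$, which is precisely the congruence under which the matrix $C(\omega)$, and hence the Seidel matrix $S$ of Theorem $7$, was constructed; so for such $k$ all hypotheses of the preceding corollary are automatically in force.
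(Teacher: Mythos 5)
Your proposal is correct and matches the paper's (implicit) argument: the corollary is a direct specialization of the preceding one to $k=3,5,7,\ldots$ with $2k-1=5,3^{2},13,\ldots$ prime powers $\equiv 1\ (\operatorname{mod}4)$, which is exactly what you verify. The arithmetic checks (including skipping $k=11$ since $21$ is not a prime power) are all sound.
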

In order to give more examples, we look for all odd $k\geq3$ and $k\leq 51$  such that $2k=p^{\alpha}+1$, $p$ odd prime, $\alpha$ non-negative integer, we obtain the following corollary.
\begin{corollary}
If $k$ is odd, $3\leq k\leq51$, we may construct the $(2k-1)$-order complex symmetric conference matrix and thus the maximal $(2k-1)$-tuple of equi-isoclinic planes with parameter $\frac{1}{2k-2}$ in
$\mathbb{R}^{2k-1}$, except possibly in the cases $k=11,17,23,29,33,35,39,43,47$.
\end{corollary}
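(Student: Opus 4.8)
The plan is a direct finite check, resting entirely on the corollary already proved in the previous subsection, which asserts $v_{\frac{1}{2k-2}}(2,2k-1)=2k-1$ as soon as $2k-1=p^{\alpha}$ for an odd prime $p$ and a non-negative integer $\alpha$, together with $2k\equiv 2\pmod 4$. I would first observe that in the present range the congruence is automatic: since $k$ is odd we have $2k\equiv 2\pmod 4$, equivalently $q:=2k-1\equiv 1\pmod 4$, which is precisely the standing hypothesis $p^{\alpha}\equiv 1\pmod 4$ of Section~2. Hence the only thing to verify for each odd $k$ with $3\le k\le 51$ is whether $q=2k-1$ is a power of an odd prime.

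Next I would run through the odd values of $k$ in the range and factor $q=2k-1$ in each case. One finds that $q$ is an odd prime power precisely for
\[
k=3,5,7,9,13,15,19,21,25,27,31,37,41,45,49,51,
\]
with corresponding orders $q=5,\,9,\,13,\,17,\,25,\,29,\,37,\,41,\,49,\,53,\,61,\,73,\,81,\,89,\,97,\,101$; here $9=3^{2}$, $25=5^{2}$, $49=7^{2}$ and $81=3^{4}$ are the non-prime prime powers, and the value $1=p^{0}$ is irrelevant since it would force $k=1$. For the remaining nine values $k=11,17,23,29,33,35,39,43,47$ the order $q=2k-1$ equals $21,33,45,57,65,69,77,85,93$ respectively, each divisible by at least two distinct primes and therefore not a prime power; for these the finite-field construction of Sections~2--3 does not apply, which is exactly the content of the ``except possibly'' clause.

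Finally, for each admissible $k$ the construction of the paper applies verbatim: Section~2 produces the complex symmetric conference matrix $C(\omega_{0})$ of order $q=2k-1$ with $\Re(\omega_{0}^{2})=\frac{2-k}{k-1}$, Section~3 produces the associated Seidel matrix $S$ of order $4k-2$ with $S^{2}=(2k-2)I_{4k-2}$, and the corollary of the previous subsection then yields a maximal $(2k-1)$-tuple of equi-isoclinic planes with parameter $\frac{1}{2k-2}$ spanning $\mathbb{R}^{2k-1}$. There is no genuine obstacle beyond the bookkeeping of the factorizations; the one point worth flagging is that the mod-$4$ hypothesis is free throughout this odd range, so the sole constraint is that $2k-1$ be a prime power, and the nine listed exceptions are exactly the odd $k\le 51$ for which this fails.
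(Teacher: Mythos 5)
Your proposal is correct and takes essentially the same route as the paper: both reduce the corollary to checking, for each odd $k$ with $3\le k\le 51$, whether $q=2k-1$ is an odd prime power (the congruence $2k\equiv 2\pmod 4$ being automatic for odd $k$), and your factorizations and resulting list of exceptions $k=11,17,23,29,33,35,39,43,47$ agree with the paper's. If anything, your explicit factorization check is more self-contained than the paper's proof, which carries out the same verification by comparing with the orders for which Paley real symmetric conference matrices exist.
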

\begin{proof}
Remark that Paley real symmetric matrices and our complex symmetric conference matrices exist for the same orders. The real conference matrix of order $46$ is not
from the Paley type that was found in \cite{M}. For the cases $k=11,17,29,35,39,47$, the integers $2k$ do not meat the necessary conditions for the existence of real symmetric
conference matrices but it does not mean that there is no complex symmetric conference matrices with orders $2k-1$.
The cases $66,86$ meat the necessary conditions but it is not yet known if we may construct real symmetric conference matrices with these orders which is also true for
complex symmetric conference matrices with orders $65$ and $85$. From this the corollary follows.
\end{proof}

The relationship described above induces a relation between the equivalence relations defined on complex symmetric conference matrices of order $q$ and on corresponding Seidel's matrices of order $2q$ as
follows: the operation which consists in multiplying
the row $\alpha$ and the corresponding column of a complex symmetric conference matrix by any unit complex number $e^{i\eta}$ reads on the corresponding Seidel' matrix as follows: multiply the
block row $\alpha$ by the plane rotation $r_{\frac{\eta}{2}}$ and the corresponding block column by $r_{-\frac{\eta}{2}}$. Indeed we have the equality:
for any real $\eta$,
\begin{equation}
r_{\frac{\eta}{2}}s_{(\pm\theta)}r_{-\frac{\eta}{2}}=s_{(\eta\pm\theta)},
\end{equation}
where $\theta$ is a real such that $\cos(2\theta)=\frac{2-k}{k-1}$. If one multiplies a complex symmetric conference matrix $C(\omega_{0})$ of order $q$ on the left by the unitary diagonal matrix
$diag(e^{i\alpha_{1}},...,e^{i\alpha_{q}})$ and on the right by $diag(e^{i\alpha_{1}},...,e^{i\alpha_{q}})$ we obtain a complex symmetric conference matrix of order $q$ to which corresponds a Seidel's matrix of order $2q$.
This last matrix is obtained by multiplication of the Seidel's matrix $S(\omega_{0})$ corresponding to $C(\omega_{0})$, on the left by $diag(r_{\alpha_{1}},..., r_{\alpha_{q}})$ and on the right
by $diag(r_{-\alpha_{1}},..., r_{-\alpha_{q}})$.
From this follows that if two complex symmetric conference matrices of order $q$ are equivalent then the corresponding Seidel's matrices are equivalent.
According, for any integer $k\geq3$ such that $2k=p^{\alpha}+1\equiv2(\operatorname{mod}4)$, $p$ odd prime, $\alpha$ non-negative integer there is a class of
congruent $(2k-1)$-tuples of equi-isoclinic planes with parameter $\frac{1}{2k-2}$ and which span a $(2k-1)$-dimensional Euclidean space. Indeed the congruence class of a $n$-tuple of
equi-isoclinic planes in the Euclidean space is determined by the equivalence class of its associated Seidel's matrix (\cite{Id}, \cite{EF}).
\begin{remark}
The associated Seidel's matrix of the $5$-tuple of the Euclidean space $\mathbb{R}^{5}$ found in \cite{Id} is in normal form. However our $10$-order Seidel's matrix obtained by Theorem $6$ is
not in normal form but can easily be brought into the first one by use of the equivalence relation defined in the above introduction and eventually the operation which consists in
permuting block rows and simultaneously the corresponding block columns.
Complex Hermitian conference matrices of even orders were used in \cite{Id0} to show that $v_{\frac{1}{2k-1}}\left( 2,2k\right)=2k$, for any $k$ for which there exists a
real skew symmetric conference matrix $C$ of order $2k$ (\cite{P}, \cite{W}). That is the matrix $iC$ is a complex Hermitian conference matrix.
Among other things the present author showed in \cite{Id1} that $v\left(2,6\right)=v_{\frac{1}{4}}\left(2,6\right)= 9$ and that if $\lambda>\frac{1}{4}$, the maximum number of equi-isoclinic planes in $\mathbb{R}^{2r}$ with parameter $\lambda$ is equal to that one of equiangular lines in $\mathbb{C}^{r}$ with angle $\arccos\sqrt\lambda$.

These are all the known results on $v(2,r)$. Of course our results do not lead to the exact values of $v(2,2k-1)$ for the concerned values of $k$ given above except the case $k=3$ but we hope that they can be
considered as a progress for the future since the odd dimension stays so mysterious in our research as in other subjects.

Our odd-orders complex conference matrices can be used efficiently to obtain
previously unknown complex Hadamard matrices by doubling their seize (\ref{3}) and they may be important in quantum information theory. These
matrices can be also used as starting points to construct Sylvester inverse
orthogonal matrices by the method developed in \cite{D}.
\end{remark}

\end{document}